\newtheorem{theorem}{Theorem}
\DeclareMathOperator*{\argmin}{arg\,min}
\newcommand{\ind}{1\hspace{-2.1mm}{1}}
\newcommand{\RR}{\mathbb{R}}
\newcommand{\NN}{\mathbb{N}}
\newcommand{\EE}{\mathbb{E}}
\newcommand{\Tt}{\mathcal{T}}
\newcommand{\eps}{\varepsilon}
\newcommand{\epsh}{\widehat{\varepsilon}}
\newcommand{\nt}{\widetilde{n}}
\newcommand{\Wh}{\widehat{W}}
\newcommand{\Bh}{\widehat{B}}
\newcommand{\Zh}{\widehat{Z}}
\newcommand{\rrho}{\overline{\rho}}
\begin{document}
\title{How many paths to simulate correlated Brownian motions?}

\author{Antoine Jacquier}
\address{Department of Mathematics, Imperial College London}
\email{a.jacquier@imperial.ac.uk}

\author{Louis Jeannerod}
\address{Ecole Polytechnique Paris}
\email{louis.jeannerod@polytechnique.edu}

\date{\today}

\keywords{Monte Carlo, correlation, Brownian motion}
\subjclass[2010]{37H10, 65C05}
\thanks{The authors are indebted to Piotr Karasinski and Inass El Harrak for suggesting a similar problem
in the context of portfolio simulation, and Johannes Ruf for his comments.}
\maketitle
\begin{abstract}
We provide an explicit formula giving the optimal number of paths needed to simulate two correlated Brownian motions.
\end{abstract}


Monte Carlo methods have a long history, and deep advances have been made over the past decades,
speeding up the computations and reducing the variance of estimators.
We refer the interested reader to the monograph~\cite{Glasserman} for a precise overview of such
techniques, in particular in the context of mathematical finance.
Some particularly powerful advances have been made in multi-level Monte Carlo methods~\cite{Giles}
and functional quantisation~\cite{Pages1, Pages2}.
The latter method aims at approximating a Brownian motion (of infinite-dimensional nature)
by a finite number of paths.
Inspired by this approach, we consider a simpler problem, for which we are able to provide a closed-form solution.

For two correlated standard Brownian motions~$W$ and~$Z$ defined on the same filtered probability space,
we fix a grid $\Tt:=\{t_j = j/m\}_{j=0,\ldots m}$ for fixed $m\in\NN$.
Denoting by~$\Wh^n$ and~$\Zh^n$ the vectors of~$n$ paths, along the grid~$\Tt$, 
approximating~$W$ and~$Z$, 
we answer the following question:

\vspace{0.3cm}
\begin{center}
\fbox{Given a level of error $\eps>0$, 
can we find $\nt<n$ such that the couple $(\Wh^n, \Zh^{\nt})$
is $\eps$-close to $(\Wh^n, \Zh^{n})$?}
\end{center}
\vspace{0.3cm}

The standard simulation approach is to simulate~$n$ paths $(W_n,B_n)$
of a two-dimensional Brownian motion~$(W, B)$,
and deduce the vector $(\Wh^n, \Zh^{n})$ by Cholesky multiplication
since 
$$
\Phi:=
\begin{pmatrix}
W\\Z
\end{pmatrix}
 = 
 \begin{pmatrix}
1 & 0\\ \rho & \rrho
\end{pmatrix}
\begin{pmatrix}
W\\ B
\end{pmatrix},
$$
where $\rrho:=\sqrt{1-\rho^2}$,
and the same identity holds for the approximation vector $\widehat{\Phi}^n := (\Wh^n, \Zh^{n})'$.
Now, if the correlation~$\rho$ is, say, equal to one, then the paths~$B^n$
are superfluous and a waste of computation time.
If $\rho=0$, then the paths~$W^n$ do not provide any information on~$Z$,
and~$n$ paths are needed to simulate~$B$.
For any integer~$n$ and tolerance level~$\eps$, we define a function $\rho \mapsto \nt^n_\eps(\rho)$,
giving the minimal number of paths to approximate~$B$,
and we consider the least square error.
\begin{theorem}
For any $\eps>0$, integer~$n$ and correlation level $\rho \in [-1,1]$, the identity
$$
\nt^n_\eps(\rho)
 = \ind_{\left\lbrace\eps < \frac{n(m+1)(1-\rho^2)}{2m}\right\rbrace}\left\lceil n-\frac{2m\eps}{(m+1)(1-\rho^2)} \right\rceil
$$
holds, where $\lceil\cdot\rceil$ denotes the ceiling function.
\end{theorem}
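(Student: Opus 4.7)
The plan is to express the mean-square error as an explicit function of $\nt$ and then invert it. The first step is to make the reduced scheme precise: when only $\nt<n$ independent paths $\Bh_1,\ldots,\Bh_{\nt}$ of $B$ have been drawn, the natural completion on the remaining $n-\nt$ paths is the conditional mean $\EE[Z\mid W]=\rho W$. Thus one defines, for $i=1,\ldots,n$ and $t_j\in\Tt$,
$$
\Zh^{\nt}_i(t_j) \;=\; \rho\,\Wh^n_i(t_j) + \rrho\,\Bh_i(t_j)\,\ind_{\{i\le \nt\}}.
$$
Subtracting from $\Zh^n_i(t_j)=\rho \Wh^n_i(t_j)+\rrho\Bh_i(t_j)$ gives the pointwise discrepancy $\rrho\,\Bh_i(t_j)\,\ind_{\{i>\nt\}}$, which isolates the independent noise on the unresolved paths.

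The second step is to compute the expected discretised $L^2$-error over the grid $\Tt$, with mesh-weight $1/m$. Independence across paths and the identity $\EE[\Bh_i(t_j)^2]=t_j=j/m$ give
$$
\EE\!\left[\frac{1}{m}\sum_{i=1}^n\sum_{j=0}^m \bigl(\Zh^n_i(t_j)-\Zh^{\nt}_i(t_j)\bigr)^2\right]
\;=\; (n-\nt)(1-\rho^2)\sum_{j=0}^m \frac{j}{m^2}
\;=\; \frac{(n-\nt)(m+1)(1-\rho^2)}{2m},
$$
using $\sum_{j=0}^m j=m(m+1)/2$. This expression is strictly decreasing in $\nt$ whenever $|\rho|<1$, so the smallest integer $\nt$ for which the error does not exceed $\eps$ is obtained by setting the right-hand side equal to $\eps$, solving for $\nt$, and taking the ceiling. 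This yields the claimed formula on the regime where the solution is positive.

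The third and final step is to dispose of the boundary cases. If $\eps\ge n(m+1)(1-\rho^2)/(2m)$, then $\nt=0$ already meets the tolerance (one may skip simulating $B$ altogether), which is precisely what the indicator in the statement encodes; this also absorbs the degenerate case $\rho=\pm 1$, where $\rrho=0$ makes $B$ irrelevant.

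The main obstacle, I expect, is not the computation itself but the clean identification of the right norm and completion rule: the factor $2m/(m+1)$ in the stated formula pins down uniquely the mesh-weight $1/m$ (matching a Riemann-sum approximation of $\int_0^1 f(t)^2\,\D t$) together with the convention that the squared error is summed, not averaged, across the $n$ paths, and the choice of $\rho W$ (rather than any other functional of $\Wh^n$) as the replacement on unresolved paths is what decouples the two terms in the discrepancy. Once these conventions are justified, the remainder is the elementary computation above.
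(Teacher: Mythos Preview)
Your proposal is correct and follows essentially the same route as the paper: you identify the completion on the missing paths as $\Zh^{\nt}_i=\rho\Wh^n_i$ (equivalently $\Bh^{\nt,i}=0$), compute the same expected discretised $L^2$-error $(n-\nt)(m+1)(1-\rho^2)/(2m)$ via $\EE[\Bh_i(t_j)^2]=j/m$, and invert to obtain the ceiling formula with the indicator handling the boundary regime. Your justification of the completion rule as the conditional mean $\EE[Z\mid W]$ is slightly more explicit than the paper's martingale remark, but the argument is otherwise identical.
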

\begin{proof}
For the optimal vector~$\widetilde{\Phi}^n:=(\Wh^n, \Zh^{\nt})'$, 
we consider an $L^2$-type error of the form
$\epsh := \frac{1}{m}\left\| \widehat{\Phi}^n - \widetilde{\Phi}^n \right\|^2$,
and we can therefore write
$$
\epsh = \frac{1}{m}\sum_{i=1}^{n}\sum_{j=1}^{m} \left(\Zh^{n,i}_{t_j} - \Zh^{\nt,i}_{t_j}\right)^2 
 = \frac{1}{m}\sum_{i=1}^{n}\sum_{j=1}^{m} \left(\Bh^{n,i}_{t_j} - \Bh^{\nt,i}_{t_j}\right)^2 
  = \frac{1-\rho^2}{m}\ind_{\lbrace \nt<n\rbrace} \sum_{i=\nt+1}^{n}\sum_{j=1}^{m} \left(\Bh^{n,i}_{t_j} - \Bh^{\nt,i}_{t_j}\right)^2,
$$
where we write $\Bh^{n} = (\Bh^{n,1}, \ldots, \Bh^{n,n})$.
Confirming the intuition above, we clearly have 
$\nt^1_\eps(\pm 1) = 0$
and
$\nt^1_\eps(0) = 1$.
We start with an initial vector $(\widetilde{\Phi}^{n,1}, \ldots, \widetilde{\Phi}^{n,n})$
consisting of~$n$ two-dimensional discrete paths (each being represented as a vector in~$\RR^{m+1}$).
Since~$B$ is a martingale starting at zero, the guesses~$\Bh^{\nt,i}$ should be null whenever
$i\in\lbrace \nt + 1,\ldots,n\rbrace$,
so that we can write
$$
\epsh = \frac{1-\rho^2}{m}\ind_{\lbrace \nt<n\rbrace} \sum_{i=\nt+1}^{n}\sum_{j=1}^{m} \left(\Bh^{n,i}_{t_j}\right)^2,
$$
and, for any target level~$\eps$, the function $\nt^n_\eps$ then reads
$$
\nt^n_\eps(\rho) = \argmin_{0\leq \nt\leq n}\left\lbrace (n-\nt)\ind_{\lbrace\EE(\hat{\eps}) < \eps\rbrace}\right\rbrace,
$$
with
$$
\EE[\hat{\eps}] = \frac{1-\rho^2}{m} \ind_{\lbrace \nt<n\rbrace} \sum_{i=\nt+1}^{n}\sum_{j=1}^{m} \EE\left[\left(\Bh^{n,i}_{t_j}\right)^2\right]\\
 = \left(1-\rho^2\right) (n-\nt) \sum_{j=1}^{m} \frac{j}{m^2} 
= \left(1-\rho^2\right) (n-\nt)\frac{m+1}{2m},
$$
because $(B_t^2 - t)_{t\geq 0}$ is a martingale, and the $\Bh^{n,i}$ independent.
The target condition $\EE(\hat{\eps}) < \eps$ 
is equivalent to
$\nt > n-\frac{2m\eps}{(m+1)\left(1-\rho^2\right)}$.
If $\frac{2m\eps}{(m+1)\left(1-\rho^2\right)}\geq n$, then clearly $\nt=0$,
otherwise, we rewrite this constraint as
$\eps < \frac{n(m+1)\left(1-\rho^2\right)}{2m}$,
and the theorem follows.
\end{proof}
We illustrate numerically the theorem, plotting the 
map $\rho\mapsto \nt^n_\eps(\rho)$ for different targets~$\eps$,
considering the time interval $[0,1]$.
Not surprisingly, as the intuition suggested, the largest number of paths is attained in the uncorrelated case,
while almost no paths are needed when the Brownian motions are fully (anti)correlated.
The symmetry of the graph is natural, as paths of Brownian motions are symmetric.

\begin{figure}[ht]
\includegraphics[scale=0.5]{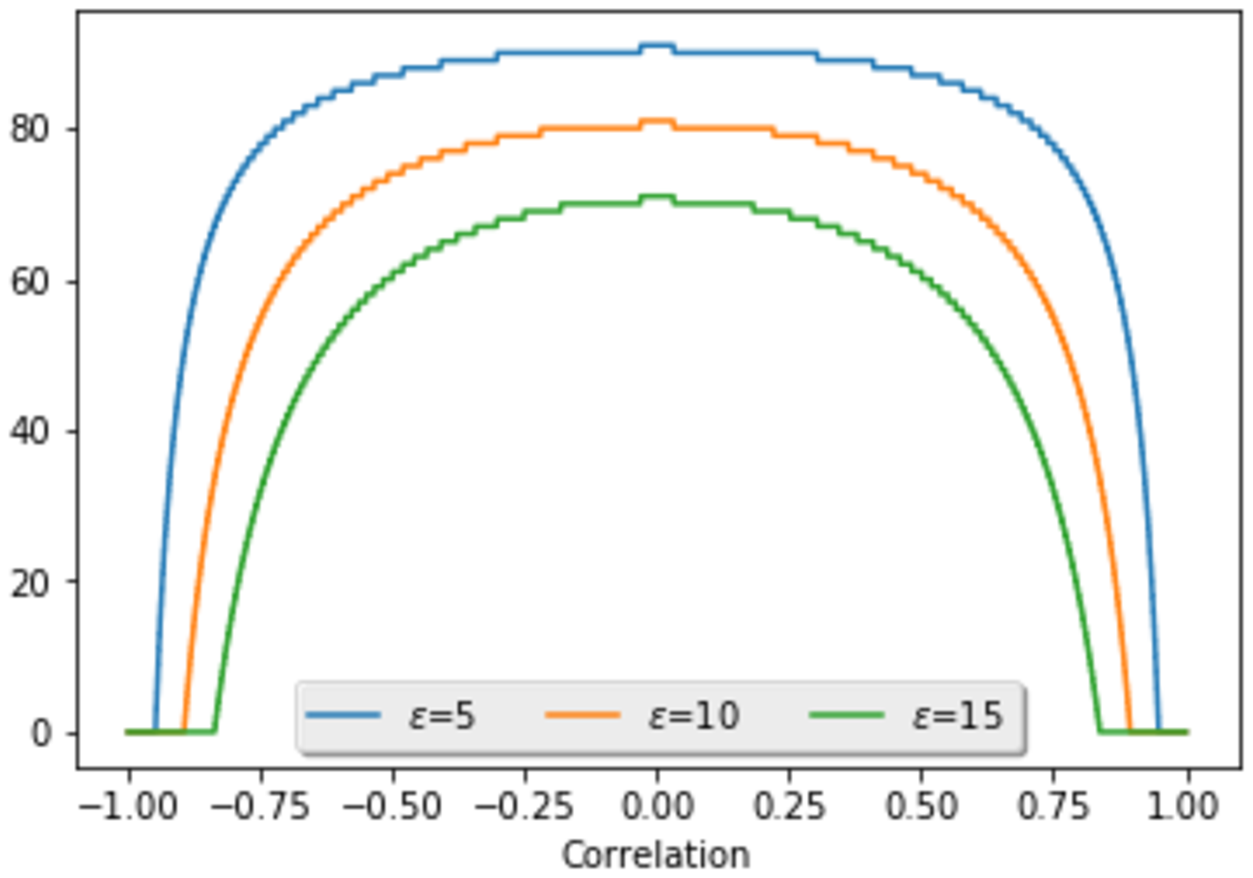} 
\caption{The function $\rho\mapsto\nt^n_\eps(\rho)$ with $n=100$ and $m=100$}
\end{figure}

The result presented here can easily, barring heavier notations, be extended to the multi-dimensional case,
or to more general Gaussian processes.
Furthermore, we only considered here a least square error and, in view of Monte Carlo simulations,
for example when simulating stochastic volatility models in mathematical finance, 
it might be more appropriate to consider other types of errors, such as 
strong $L^2$-error or weak $L^2$-error.
We, however, leave the precise analysis of these schemes to future research.

\end{document}